\documentclass[final,nomarks]{dmtcs-episciences}
\usepackage[utf8]{inputenc}
\usepackage{subfigure}
\usepackage{latexsym}
\usepackage{amsfonts}
\usepackage{amsmath}
\usepackage{amssymb}
\usepackage{amsthm}
\usepackage{enumerate}
\usepackage{caption}
\usepackage{tabu}
\usepackage{float}
\usepackage{mathrsfs}

\newcommand{\Av}{\operatorname{Av}}

\usepackage{graphicx}
\usepackage{epstopdf}
\usepackage{epsfig}
\usepackage{caption}
 
\usepackage{bm} 
\usepackage{cite}

\makeatletter
\newtheorem*{rep@theorem}{\rep@title}
\newcommand{\newreptheorem}[2]{%
\newenvironment{rep#1}[1]{%
 \def\rep@title{#2 \ref{##1}}%
 \begin{rep@theorem}}%
 {\end{rep@theorem}}}
\makeatother

\newtheorem{theorem}{Theorem}[section]
\newreptheorem{theorem}{Conjecture}

\newtheorem{corollary}{Corollary}[section]

\theoremstyle{definition}

\newtheorem{remark}{Remark}[section]

\DeclareMathOperator{\Res}{Res}
\DeclareMathOperator{\inv}{inv}
\DeclareMathOperator{\EN}{EN}

\author{Colin Defant\affiliationmark{1,2}\thanks{The author was supported by a Fannie and John Hertz Foundation Fellowship and an NSF Graduate Research Fellowship.}}
\title[Pattern-Avoiding Linear Extensions]{Proofs of Conjectures about Pattern-Avoiding Linear Extensions}

\affiliation{
  Princeton University}
\keywords{Permutation pattern; linear extension; heap; rectangular poset.}

\received{2019-5-8}
\revised{2019-9-13}
\accepted{2019-9-15}

\begin{document}
\publicationdetails{21}{2019}{4}{16}{5438}
\maketitle

\begin{abstract}
After fixing a canonical ordering (or labeling) of the elements of a finite poset, one can associate each linear extension of the poset with a permutation. Some recent papers consider specific families of posets and ask how many linear extensions give rise to permutations that avoid certain patterns. We build off of two of these papers. We first consider pattern avoidance in $k$-ary heaps, where we obtain a general result that proves a conjecture of Levin, Pudwell, Riehl, and Sandberg in a special case. We then prove some conjectures that Anderson, Egge, Riehl, Ryan, Steinke, and Vaughan made about pattern-avoiding linear extensions of rectangular posets.  
\end{abstract}

\section{Introduction}\label{Sec:Intro}

Let $S_n$ be the set of permutations of $[n]=\{1,\ldots,n\}$, which we write as words in one-line notation. Given $\tau=\tau_1\cdots\tau_m\in S_m$, we say a permutation $\sigma=\sigma_1\cdots\sigma_n$ \emph{contains the pattern} $\tau$ if there exist indices $i_1<\cdots<i_m$ in $[n]$ such that for all $j,k\in[m]$, we have $\sigma_{i_j}<\sigma_{i_k}$ if and only if $\tau_j<\tau_k$. We say $\sigma$ \emph{avoids} $\tau$ if it does not contain $\tau$. Let $\Av_n(\tau^{(1)},\tau^{(2)},\ldots)$ be the set of permutations avoiding the patterns $\tau^{(1)},\tau^{(2)},\ldots$ (this list of patterns could be finite or infinite). 

The study of pattern avoidance in permutations, which began with Knuth's analysis of permutations that are sortable via a stack \cite{Knuth}, has grown into a large, thriving area of research in combinatorics \cite{Bona,Kitaev,Linton}. Recently, there has been a great amount of interest in pattern avoidance in other combinatorial objects such as words, inversion sequences, set partitions, and trees \cite{Albert, Anderson, Baxter, Bevan, Branden, Burstein2, Corteel, Dairyko, Daly, DefantEnumeration, DefantFertilityWilf, DefantPoset, DefantClass, DefantKravitz, DefantSupertrees, Godbole, Levin, Mansour1, Mansour2, Martinez, Rowland, Yakoubov}. The papers \cite{Anderson2, Bevan, DefantPoset, Levin, Yakoubov} consider the following general type of problem. Let $\mathcal P$ be an $n$-element poset, and suppose we are given some canonical total ordering of the elements of $\mathcal P$. We can view each linear extension of $\mathcal P$ as a bijective labeling of the elements of $\mathcal P$ with the elements of $[n]$. If we read these labels in the canonical order, we obtain a permutation. We can then ask how many linear extensions give rise to permutations that avoid certain patterns. An alternative, yet essentially equivalent, formulation of this problem is to consider a canonical \emph{labeling} of $\mathcal P$ and then view each linear extension of $\mathcal P$ as an ordering. Again, each linear extension gives rise to a permutation, so we can ask the same enumerative questions about pattern avoidance. 

As a first example, we consider the problems introduced in \cite{Levin}. A \emph{complete $k$-ary tree} is a rooted tree in which each vertex that is not in the penultimate or the last level has exactly $k$ children and all vertices in the last level are as far left as possible. We make the convention that trees grow up from their roots (as in real life). A \emph{$k$-ary heap} is a complete $k$-ary tree whose vertices are bijectively labeled with the elements of $[n]$ so that each vertex is given a label that is smaller than the labels of its children. Associating complete $k$-ary trees with posets in the natural way, one can view a $k$-ary heap as a linear extension of the underlying complete $k$-ary tree. We endow each complete $k$-ary tree with the breadth-first ordering (traversing the levels from bottom to top with each level traversed from left to right). Reading the labels in a $k$-ary heap in this order yields a permutation in $S_n$, which we call the permutation \emph{associated to} the $k$-ary heap. For example, the left image in Figure \ref{Fig1} shows the unique complete binary tree with $12$ vertices. The right image shows a binary heap obtained by labeling this tree. The permutation associated to this binary heap is $1\,3\,2\,4\,6\,9\,7\,12\,5\,11\,8\,10$. In Section \ref{Sec:Heaps}, we prove a conjecture from \cite{Levin} concerning binary heaps whose associated permutations avoid the pattern $321$. In fact, we will prove a much more general result. 

\begin{figure}[h]
\begin{center}
\includegraphics[width=.8\linewidth]{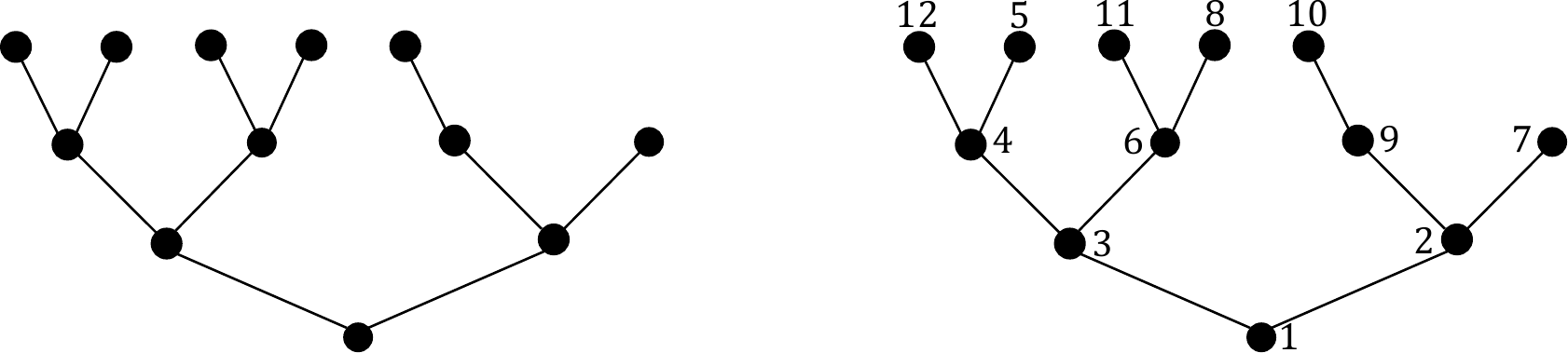}
\end{center}  
\caption{The complete binary tree with $12$ vertices (left) and a $12$-vertex binary heap (right). Reading the labels of this heap in the breadth-first order yields the associated permutation $1\,3\,2\,4\,6\,9\,7\,12\,5\,11\,8\,10$.}
\end{figure}\label{Fig1}

The second type of problem we consider deals with the rectangular posets $\EN_{s,t}$ studied in \cite{Anderson2}. The poset $\EN_{s,t}$ has $st$ elements that are labeled in a canonical fashion. The easiest way to define these posets is via examples. The Hasse diagrams of $\EN_{3,2}$, $\EN_{3,5}$, and $\EN_{4,3}$, along with their canonical labelings, are shown in Figure \ref{Fig2}. Each linear extension of $\EN_{s,t}$ can be viewed as an ordering of the labels, which is a permutation in $S_{st}$. For example, the linear extensions of $\EN_{3,2}$ correspond to the permutations $531642,536142,536412,563142$, and $563412$. In Section \ref{Sec:Rectangular}, we prove several of the conjectures that the authors of \cite{Anderson2} posed about pattern-avoiding linear extensions of these posets. 

\begin{figure}[h]
\begin{center}
\includegraphics[width=.65\linewidth]{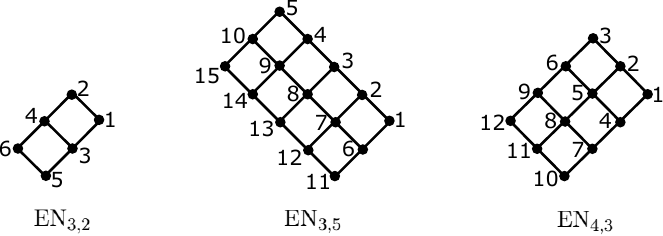}
\end{center}  
\caption{The Hasse diagrams of three rectangular posets. The numbers show the canonical labelings.}
\end{figure}\label{Fig2}

\section{Pattern-Avoiding $k$-ary Heaps}\label{Sec:Heaps}

Let $\mathcal H_n^k(\tau^{(1)},\tau^{(2)},\ldots)$ denote the set of $n$-vertex $k$-ary heaps whose associated permutations avoid \linebreak the patterns $\tau^{(1)},\tau^{(2)},\ldots$. The authors of \cite{Levin} found formulas for $|\mathcal H_n^k(\tau^{(1)},\ldots,\tau^{(r)})|$ for every set $\{\tau^{(1)},\ldots,\tau^{(r)}\}\subseteq S_3$ except the singleton set $\{321\}$. In fact, they were unable to explicitly enumerate binary heaps avoiding $321$. They did, however, compute $|\mathcal H_n^2(321)|$ for $1\leq n\leq 31$ and prove that \[2^{n-1}<|\mathcal H_n^2(321)|<4^n.\] Their data led them to conjecture that\footnote{The actual conjecture was written as ``$|\mathcal H_n^2(321)|\sim c^n$ for some $c\in(3.66,4)$," but it is clear from context that the authors really meant what is written in \eqref{Eq1}.} 
\begin{equation}\label{Eq1}
\lim_{n\to\infty}|\mathcal H_n^2(321)|^{1/n}\text{ exists and is in the interval }(3.66,4].
\end{equation} We will see that this conjecture follows as a special consequence of the main theorem of this section. To state this theorem, we need one additional piece of terminology. 

Given $\lambda=\lambda_1\cdots\lambda_\ell\in S_\ell$ and $\mu=\mu_1\cdots\mu_m\in S_m$, the \emph{direct sum} of $\lambda$ and $\mu$, denoted $\lambda\oplus\mu$, is the permutation in $S_{\ell+m}$ obtained by ``placing $\mu$ above and to the right of $\lambda$." More formally, the $i^\text{th}$ entry of $\lambda\oplus\mu$ is \[(\lambda\oplus\mu)_i=\begin{cases} \lambda_i & \mbox{if } 1\leq i\leq \ell; \\ \mu_{i-\ell}+\ell & \mbox{if } \ell+1\leq i\leq \ell+m. \end{cases}\] A permutation is called \emph{sum indecomposable} if it cannot be written as the direct sum of two shorter permutations. 

\begin{theorem}\label{Thm1}
Fix an integer $k\geq 2$ and a nonempty sequence of permutation patterns $\tau^{(1)},\tau^{(2)},\ldots$. If the permutations $\tau^{(1)},\tau^{(2)},\ldots$ are all sum indecomposable, then \[\lim_{n\to\infty}|\Av_n(\tau^{(1)},\tau^{(2)},\ldots)|^{1/n}=\lim_{n\to\infty}|\mathcal H_n^k(\tau^{(1)},\tau^{(2)},\ldots)|^{1/n}.\]
\end{theorem}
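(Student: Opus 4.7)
My plan is to sandwich $h_n^{1/n}$ between two quantities that each tend to the same limit, where I write $a_n = |\Av_n(\tau^{(1)},\tau^{(2)},\ldots)|$ and $h_n = |\mathcal H_n^k(\tau^{(1)},\tau^{(2)},\ldots)|$. The first step is to justify existence of $L := \lim_{n\to\infty} a_n^{1/n}$. The key observation is that if $\tau^{(i)}$ is sum indecomposable, then any occurrence of $\tau^{(i)}$ in a direct sum $\lambda \oplus \mu$ must lie entirely within the $\lambda$-block or entirely within the $\mu$-block; consequently, $\lambda \oplus \mu$ avoids every $\tau^{(i)}$ if and only if both $\lambda$ and $\mu$ do. This gives the super-multiplicativity $a_{m+n} \geq a_m a_n$, so Fekete's lemma applied to $\log a_n$ (together with the Marcus--Tardos theorem to ensure finiteness) yields the limit $L \in [1,\infty)$. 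The upper bound $\limsup_{n\to\infty} h_n^{1/n} \leq L$ is then immediate from the trivial inclusion $h_n \leq a_n$, which holds because the complete $k$-ary tree on $n$ vertices is unique and distinct heaps therefore give distinct breadth-first readings.

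For the matching lower bound I will construct many pattern-avoiding heaps by a level-based labeling scheme. Let the levels of the tree be $\Lambda_0, \Lambda_1, \ldots, \Lambda_{D-1}$ (with $\Lambda_0$ the root level), and set $s_\ell = |\Lambda_\ell|$, so $s_\ell = k^\ell$ for $0 \leq \ell \leq D-2$ and $1 \leq s_{D-1} \leq k^{D-1}$. Partition $[n]$ into consecutive blocks $B_0 < B_1 < \cdots < B_{D-1}$ with $|B_\ell| = s_\ell$ and require that level $\Lambda_\ell$ receive exactly the labels in $B_\ell$. The heap condition is then automatic, since each label in $B_\ell$ exceeds every label in $B_{\ell-1}$, so the $s_\ell$ labels of $B_\ell$ may be distributed among the $s_\ell$ vertices of $\Lambda_\ell$ in any of $s_\ell!$ ways. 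The associated permutation, read in breadth-first order, is precisely the direct sum $\pi_0 \oplus \pi_1 \oplus \cdots \oplus \pi_{D-1}$, where $\pi_\ell \in S_{s_\ell}$ records the reading of level $\Lambda_\ell$ and may be chosen to be any element of $S_{s_\ell}$.

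Iterating the sum-indecomposability observation, this direct sum avoids all of the $\tau^{(i)}$ if and only if each $\pi_\ell$ does, so
\[
h_n \;\geq\; \prod_{\ell=0}^{D-1} a_{s_\ell}.
\]
Given $\varepsilon > 0$, choose $m_0$ with $a_m \geq (L-\varepsilon)^m$ for every $m \geq m_0$. Since $s_\ell = k^\ell \to \infty$, only boundedly many levels have $s_\ell < m_0$, and the total size of those levels is bounded by a constant $M = M(k,m_0)$ independent of $n$. Hence $\prod_\ell a_{s_\ell} \geq (L-\varepsilon)^{n-M}$, which gives $\liminf_{n\to\infty} h_n^{1/n} \geq L - \varepsilon$; letting $\varepsilon \to 0$ closes the sandwich. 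The main conceptual point is the level-by-level construction that converts pattern-avoiding permutations of sizes $1, k, k^2, \ldots$ into pattern-avoiding heaps via a single global direct-sum decomposition; once that structural observation is in place, the asymptotic estimate is essentially bookkeeping.
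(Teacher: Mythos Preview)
Your argument is correct (modulo the trivial case where some $\tau^{(i)}$ has length $1$, which you implicitly exclude when asserting $L\geq 1$; this is harmless since then $a_n=h_n=0$ for all $n\geq 1$). However, your route to the lower bound is genuinely different from the paper's.

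The paper splits the BFS sequence into two pieces: the first $m=\lfloor (n-1)/k\rfloor$ vertices, which form a complete $k$-ary subtree, and the remaining $n-m$ vertices, which constitute an antichain. This yields the recursive inequality $h_n\geq h_m\,a_{n-m}$, and because the right-hand side involves $h_m$ rather than only values of $a$, the paper finishes with a short contradiction argument comparing $L_b=\liminf h_n^{1/n}$ to $L_a$. Your approach instead slices the tree by levels, producing the direct inequality $h_n\geq\prod_{\ell}a_{s_\ell}$ with no $h$-terms on the right. This lets you read off $\liminf h_n^{1/n}\geq L-\varepsilon$ immediately, without recursion or contradiction, and is arguably cleaner. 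The paper's two-block split, on the other hand, uses only a single antichain and a single application of the direct-sum closure property, and its inequality $h_n\geq h_m\,a_{n-m}$ is slightly sharper pointwise (it captures heaps that are not level-constant on the first $m$ vertices). Both arguments rest on the same structural facts: antichains in BFS-contiguous positions allow arbitrary relabeling, and sum-indecomposable patterns cannot straddle blocks of a direct sum.
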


\begin{remark}\label{Rem1}
How do we know that the limits in Theorem \ref{Thm1} actually exist? Because the permutations $\tau^{(1)},\tau^{(2)},\ldots$ are sum indecomposable, we have an injective map $S_\ell\times S_m\hookrightarrow S_{\ell+m}$ given by $(\lambda,\mu)\mapsto\lambda\oplus\mu$ for all $\ell,m\geq 1$. This shows that the sequence $(|\Av_n(\tau^{(1)},\tau^{(2)},\ldots)|)_{n\geq 1}$ is supermultiplicative, so the limit on the left-hand side exists by Fekete's lemma. We also have an injective map $\mathcal H_n^k(\tau^{(1)},\tau^{(2)},\ldots)\hookrightarrow\Av_n(\tau^{(1)},\tau^{(2)},\ldots)$ sending each $k$-ary heap to its associated permutation. This shows that $\limsup\limits_{n\to\infty}|\mathcal H_n^k(\tau^{(1)},\tau^{(2)},\ldots)|^{1/n}\leq\lim\limits_{n\to\infty}|\Av_n(\tau^{(1)},\tau^{(2)},\ldots)|^{1/n}$. We will see in the proof of Theorem \ref{Thm1} that $\lim\limits_{n\to\infty}|\Av_n(\tau^{(1)},\tau^{(2)},\ldots)|^{1/n}\leq\liminf\limits_{n\to\infty}|\mathcal H_n^k(\tau^{(1)},\tau^{(2)},\ldots)|^{1/n}$, so the limit on the right-hand side exists as well. It is interesting to note that this right-hand limit does not depend on $k$. \hfill$\lozenge$ 
\end{remark}

Let $C_n=\frac{1}{n+1}{2n\choose n}$ denote the $n^\text{th}$ Catalan number. It is well known that $|\Av_n(321)|=C_n$. This implies that $\lim\limits_{n\to\infty}|\Av_n(321)|^{1/n}=4$. Consequently, we can specialize Theorem \ref{Thm1} to obtain the following result. In the special case in which $k=2$, this settles the conjecture from \cite{Levin} that is stated in \eqref{Eq1}. 

\begin{corollary}\label{Cor2}
If $k\geq 2$, then $\displaystyle\lim_{n\to\infty}|\mathcal H_n^k(321)|^{1/n}=4$.
\end{corollary}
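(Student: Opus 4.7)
The plan is to apply Theorem \ref{Thm1} directly, with a single pattern $\tau^{(1)}=321$. To do this, I need to verify two ingredients: that the hypothesis of Theorem \ref{Thm1} is satisfied, and that the right-hand side of the statement of Theorem \ref{Thm1} evaluates to $4$ for this specific pattern.

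First, I would check that $321$ is sum indecomposable. If we could write $321=\lambda\oplus\mu$ for nonempty $\lambda\in S_\ell$ and $\mu\in S_m$ with $\ell+m=3$, then by definition every entry of $\mu$ (when placed into $\lambda\oplus\mu$) would exceed every entry of $\lambda$, i.e., the last $m$ entries of $321$ would all be larger than the first $\ell$ entries. But the entries of $321$ form a strictly decreasing sequence, so this is impossible. Hence $321$ is sum indecomposable and the hypothesis of Theorem \ref{Thm1} applies.

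Second, I would invoke the classical fact stated in the excerpt, namely $|\Av_n(321)|=C_n$, and the standard consequence of Stirling's formula that $C_n\sim\frac{4^n}{\sqrt{\pi}\,n^{3/2}}$, which yields $\lim_{n\to\infty}C_n^{1/n}=4$. Combining this with Theorem \ref{Thm1} gives
\[
\lim_{n\to\infty}|\mathcal H_n^k(321)|^{1/n}=\lim_{n\to\infty}|\Av_n(321)|^{1/n}=\lim_{n\to\infty}C_n^{1/n}=4,
\]
as desired.

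There is essentially no obstacle here: the corollary is a one-line specialization of Theorem \ref{Thm1}, and the only nontrivial check (sum indecomposability of $321$) is immediate from the monotonicity of $321$. All the genuine work has been absorbed into Theorem \ref{Thm1}.
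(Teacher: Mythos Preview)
Your proof is correct and matches the paper's own argument: the paper derives the corollary by specializing Theorem~\ref{Thm1} to the single pattern $321$, noting that $|\Av_n(321)|=C_n$ and hence $\lim_{n\to\infty}|\Av_n(321)|^{1/n}=4$. Your explicit verification that $321$ is sum indecomposable is a small elaboration the paper leaves implicit, but otherwise the two proofs are identical.
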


\begin{proof}[of Theorem \ref{Thm1}]
Fix an integer $k\geq 2$ and a sequence of sum indecomposable permutation patterns $\tau^{(1)},\tau^{(2)},\ldots$. As mentioned in Remark \ref{Rem1}, we must prove that \[\lim_{n\to\infty}|\Av_n(\tau^{(1)},\tau^{(2)},\ldots)|^{1/n}\leq\liminf_{n\to\infty}|\mathcal H_n^k(\tau^{(1)},\tau^{(2)},\ldots)|^{1/n}.\] To ease 
notation, let \[a(n)=|\Av_n(\tau^{(1)},\tau^{(2)},\ldots)|^{1/n},\quad b(n)=|\mathcal H_n^k(\tau^{(1)},\tau^{(2)},\ldots)|^{1/n},\] \[\quad L_a=\lim_{n\to\infty}a(n),\quad L_b=\liminf_{n\to\infty}b(n).\] The proof is trivial if 
one of the patterns $\tau^{(i)}$ is the permutation $1\in S_1$, so we may assume otherwise. Because the patterns $\tau^{(1)},\tau^{(2)},\ldots$ are sum indecomposable, the identity permutation $123\cdots n$ is in $\Av_n(\tau^{(1)},\tau^{(2)},\ldots)$ and is the permutation associated to an element of $\mathcal H_n^k(\tau^{(1)},\tau^{(2)},\ldots)$. Thus, $L_a,L_b\geq 1$. Fix $\varepsilon\in(0,1)$, and let $N\geq 1$ be such that $a(n-\left\lfloor(n-1)/k\right\rfloor)>L_a-\varepsilon$ and $b(\left\lfloor(n-1)/k\right\rfloor)\!>L_b-\varepsilon$ for every $n\geq N$. Now choose $n\geq N$ such that $b(n)<L_b+\varepsilon$, and put $m=\left\lfloor(n-1)/k\right\rfloor$. 

\noindent {\bf Claim:} If $\lambda\in S_m$ is a permutation associated to a $k$-ary heap in $\mathcal H_m^k(\tau^{(1)},\tau^{(2)},\ldots)$ and \linebreak $\mu\in\Av_{n-m}(\tau^{(1)},\tau^{(2)},\ldots)$, then $\lambda\oplus\mu$ is a permutation associated to a $k$-ary heap in $\mathcal H_n^k(\tau^{(1)},\tau^{(2)},\ldots)$. 

Let us see how this claim implies Theorem \ref{Thm1}. Suppose by way of contradiction that $L_b<L_a$. Assuming the claim, we have \[(L_b+\varepsilon)^n>b(n)^n=|\mathcal H_n^k(\tau^{(1)},\tau^{(2)},\ldots)|\geq|\mathcal H_m^k(\tau^{(1)},\tau^{(2)},\ldots)|\cdot|\Av_{n-m}(\tau^{(1)},\tau^{(2)},\ldots)|\] \[=b(m)^ma(n-m)^{n-m}>(L_b-\varepsilon)^m(L_a-\varepsilon)^{n-m}.\] Since $m<n/2$ and $L_b<L_a$, we have $(L_b-\varepsilon)^m(L_a-\varepsilon)^{n-m}>(L_b-\varepsilon)^{n/2}(L_a-\varepsilon)^{n/2}$. Hence, $(L_b+\varepsilon)^2>(L_b-\varepsilon)(L_a-\varepsilon)$. Letting $\varepsilon$ tend to $0$ shows that $L_b\geq L_a$, contradicting our assumption that $L_b<L_a$. This completes the proof of Theorem \ref{Thm1} assuming the claim. 

Now, let $\lambda$ and $\mu$ be as in the claim. Let $T$ be the complete $k$-ary tree with $n$ vertices, and let $v_i$ be the vertex of $T$ that appears $i^\text{th}$ in the
breadth-first ordering. It is straightforward to check that the vertices $v_{m+1},\ldots,v_n$ are incomparable (meaning none is a descendant of another). It follows that $\lambda\oplus\mu$ is the permutation associated to some $k$-ary heap in $\mathcal H_n^k$. Because $\lambda$ and $\mu$ avoid the sum indecomposable permutations $\tau^{(1)},\tau^{(2)},\ldots$, their direct sum $\lambda\oplus\mu$ must also avoid these permutations. This completes the proof of the claim. 
\end{proof}

\section{Rectangular Posets}\label{Sec:Rectangular}

An \emph{inversion} of a permutation $\pi=\pi_1\cdots\pi_n\in S_n$ is a pair $(i,j)$ such that $i<j$ and $\pi_i>\pi_j$. Let $\inv(\pi)$ denote the number of inversions of $\pi$. Following \cite{Anderson2}, we let $\EN_{s,t}(\tau)(q)=\sum_{\pi\in\EN_{s,t}(\tau)}q^{\inv(\pi)}$, where $\EN_{s,t}(\tau)$ is the set of linear extensions of $\EN_{s,t}$ (viewed as permutations of the labels of $\EN_{s,t}$) that avoid the pattern $\tau$. The following three theorems were stated as conjectures in \cite{Anderson2}\footnote{Technically speaking, Theorem \ref{Thm2} was stated incorrectly in that article.}.
  
\begin{theorem}\label{Thm2}
For all $t\geq 1$, we have \[\EN_{3,t}(1243)(q)=\frac{q^{3(t^2-t+1)}(1-q^{2t-1}-2q^{2t}+q^{3t-1}+q^{3t})}{(1-q)(1-q^2)}.\]
\end{theorem}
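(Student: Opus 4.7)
The plan is to enumerate the $1243$-avoiding linear extensions of $\EN_{3,t}$ directly from a structural description of such extensions, and then compute their inversion generating function in closed form.

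First, I would unpack the canonical labeling of $\EN_{3,t}$ shown in Figure~\ref{Fig2} to give an explicit combinatorial description of its linear extensions. The goal is to present each linear extension $\pi$ as a monotone shuffle of three chain-like segments (the ``columns'' of $\EN_{3,t}$), subject to the relations coming from the poset's cover structure. In this description one identifies a canonical minimum-inversion extension $\pi_{0}$ whose inversion count equals $3(t^{2}-t+1)$; this explains the prefactor $q^{3(t^{2}-t+1)}$ in the theorem. All other extensions are obtained from $\pi_{0}$ by allowed transpositions of adjacent labels in the shuffle, each contributing a controlled amount to $\inv(\pi)$.

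Next, I would determine which shuffles correspond to $1243$-avoiding extensions. Because $1243$ has the shape ``ascent, then a peak, then a descent from the peak,'' an occurrence of this pattern in $\pi$ forces a very particular interleaving of labels from distinct chains of $\EN_{3,t}$. I expect the condition to restrict the admissible extensions to a two-parameter family, indexed by a pair of nonnegative integers $(a,b)$ with an explicit cap depending on $t$. The form of the denominator $(1-q)(1-q^{2})$ strongly suggests that increasing $a$ and $b$ by one contributes $q$ and $q^{2}$ respectively to the inversion count, so the weighted count factors as a double geometric series.

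Having parameterized the $1243$-avoiders, I would compute $\inv(\pi)$ on the family and sum $q^{\inv(\pi)}$. The boundary of the allowed region of $(a,b)$-values will produce the subtractions $-q^{2t-1}-2q^{2t}$ and the re-additions $+q^{3t-1}+q^{3t}$ in the numerator via inclusion-exclusion on the cap. Two sanity checks validate the analysis: at $t=1$ the formula collapses to $q^{3}$; at $t=2$ it yields $q^{9}+q^{10}+2q^{11}+q^{12}$, which matches the inversion counts of the five linear extensions of $\EN_{3,2}$ listed in Section~\ref{Sec:Intro} (and one verifies directly that all five of them avoid $1243$).

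The main obstacle will be the characterization step. Because $1243$ is neither a monotone pattern nor of the form $12\oplus\tau$, its occurrences do not stay within a single chain of $\EN_{3,t}$, and the forbidden configurations must be tracked across the three chains simultaneously. Establishing that the set of $1243$-avoiders is genuinely a two-parameter family — rather than a higher-dimensional or more jagged region — will require a careful case analysis on where the peak ``$4$'' of a potential occurrence can sit, together with a rigidity argument showing that once one resolves this position the rest of the shuffle is forced up to the two degrees of freedom recorded by $(a,b)$.
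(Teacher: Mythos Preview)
Your high-level plan matches the paper's: show that the $1243$-avoiding linear extensions of $\EN_{3,t}$ form a two-parameter family, then sum $q^{\inv(\pi)}$ over it. What you flag as ``the main obstacle'' is precisely where the paper supplies the decisive --- and quite short --- observation; without it your proposal is still a sketch rather than a proof.

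The missing step is this. In any $1243$-avoider $\pi$ (for $t\ge 3$), the entry $2t$ must precede $2$ (otherwise $1,2,2t,t$ form a $1243$ pattern) and $3t$ must precede $t+2$ (otherwise $t{+}1,t{+}2,3t,2t$ form a $1243$ pattern). Together with the poset relations, these two forcings pin down \emph{every} entry except $1$ and $t+1$: deleting those two from $\pi$ always leaves the fixed word $(2t{+}1)(2t{+}2)\cdots(3t)\,(t{+}2)\cdots(2t)\,2\,3\cdots t$. The two free parameters are simply the positions of $t+1$ and of $1$, say $\pi_{i+1}=t+1$ and $\pi_{j+1}=1$ with $1\le i\le t$ and $i<j\le 2t$, and one computes $\inv(\pi)=3t^{2}-3t+i+j$. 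The claimed rational function is then nothing more than
\[
\sum_{i=1}^{t}\sum_{j=i+1}^{2t} q^{\,3t^{2}-3t+i+j}.
\]
So the ``rigidity argument'' you anticipate is a two-line pattern check on two specific quadruples of labels, not a case analysis on where the ``$4$'' of a generic occurrence sits. Once you spot those two forced precedences, the characterization and the summation are both immediate.
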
  

\begin{theorem}\label{Thm3}
For all $s\geq 1$, we have \[\EN_{s,2}(2143)(q)=q^{(2s-1)(s-1)}(1+q)^{s-1}.\]
\end{theorem}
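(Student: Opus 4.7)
The plan is to reduce the problem to a question about Dyck paths. The poset $\EN_{s,2}$ is an $s\times 2$ rectangular grid, and (as one verifies from the $s=3$ data in Section~\ref{Sec:Intro}) its canonical labeling places $2(s-i)+j$ at position $(i,j)$. Any linear extension therefore lays down the column-$1$ labels $a_u:=2s-2u+1$ in order and the column-$2$ labels $b_v:=2s-2v+2$ in order, subject only to the Dyck condition that column~$1$ never fall behind. Writing $L$ and $R$ for the two choices, linear extensions of $\EN_{s,2}$ are in bijection with Dyck paths of semi-length $s$; I let $p_u$ (resp.\ $q_v$) denote the position of the $u$-th $L$ (resp.\ $v$-th $R$), and note the crucial inequality $a_u<b_v\iff v\le u$.

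The first main step is to show that a Dyck path yields a $2143$-avoiding permutation if and only if it has maximum height at most $2$. The ``only if'' direction is by construction: if the path reaches height~$3$, then $p_u<q_{u-2}$ for some $u\ge 3$, so the four entries $a_{u-1},a_u,b_{u-2},b_{u-1}$ at positions $p_{u-1}<p_u<q_{u-2}<q_{u-1}$ have ranks $2,1,4,3$, forming a $2143$ pattern. The converse is the main technical obstacle: one inspects the $16$ possible column-assignments of a putative $2143$-quadruple $(i_1,i_2,i_3,i_4)$ and checks that $15$ of them are incompatible with $\pi_{i_2}<\pi_{i_1}<\pi_{i_4}<\pi_{i_3}$ (using that each column is decreasing and that $p_u<q_u$ for all $u$). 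The sole surviving case---$i_1,i_2$ in column~$1$ and $i_3,i_4$ in column~$2$---forces $p_{u'}<q_v$ for some $u'-v\ge 2$, which is precisely the condition that the path reach height~$3$.

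Having reduced to Dyck paths of semi-length $s$ with max height $\le 2$, I exploit their arch decomposition: the height constraint forces every arch of semi-length $m$ to have the unique shape $LR$ (if $m=1$) or $LL(RL)^{m-2}RR$ (if $m\ge 2$), so such paths biject with compositions $(m_1,\ldots,m_\ell)$ of $s$, of which there are $\binom{s-1}{\ell-1}$ with exactly $\ell$ parts. Because consecutive arches consume disjoint decreasing blocks of labels (every label in an earlier arch exceeds every label in a later one), every pair of positions in different arches is an inversion, contributing $4\sum_{i<j}m_im_j=2s^2-2\sum_i m_i^2$. For a single arch of size $m$, using the explicit positions $p_1=1$, $p_i=2i-2$ for $i\ge 2$, $q_j=2j+1$ for $j\le m-1$, and $q_m=2m$, a routine count of within-arch inversions gives $(m-1)(2m-1)$.

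Adding the two contributions,
\[\sum_{i=1}^\ell(m_i-1)(2m_i-1)+2s^2-2\sum_{i=1}^\ell m_i^2=2s^2-3s+\ell=(2s-1)(s-1)+(\ell-1),\]
which depends on the composition only through its number of parts. Hence
\[\EN_{s,2}(2143)(q)=\sum_{\ell=1}^s\binom{s-1}{\ell-1}\,q^{(2s-1)(s-1)+\ell-1}=q^{(2s-1)(s-1)}(1+q)^{s-1}.\]
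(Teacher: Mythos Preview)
Your proof is correct. The Dyck-path encoding of linear extensions is set up properly, the height-$\le 2$ characterization of $2143$-avoiders is valid (indeed, the ``decreasing columns'' constraint alone already kills all but the two assignments $\{i_1,i_2\}\mid\{i_3,i_4\}$ and $\{i_3,i_4\}\mid\{i_1,i_2\}$, and the Dyck inequality $p_u<q_u$ rules out the second), the arch decomposition into compositions is right, and the inversion bookkeeping checks out.

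Your route, however, differs substantially from the paper's. The paper gives a one-line argument: it defines $\eta\colon\EN_{s,2}(2143)\to 2^{\{1,3,\ldots,2s-3\}}$ by $\eta(\pi)=\{i:\ i+3\text{ precedes }i\text{ in }\pi\}$, asserts that $\eta$ is a bijection, and reads off the generating function. In your Dyck-path language, $i=a_u$ and $i+3=b_{u-1}$, so $\eta$ records exactly those $u\in\{2,\ldots,s\}$ for which the $u$-th $L$ starts from height~$0$; thus $|\eta(\pi)|=\ell-1$, where $\ell$ is your number of arches, and the two statistics agree. What your approach buys is a fully justified argument: you actually \emph{prove} which linear extensions avoid $2143$ and compute the inversion numbers from scratch, whereas the paper leaves both the bijectivity of $\eta$ and the identity $\inv(\pi)=(2s-1)(s-1)+|\eta(\pi)|$ to the reader. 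The paper's approach buys brevity, and its bijection is arguably more canonical (a direct subset encoding rather than a composition), but the underlying combinatorics is the same height-$\le 2$ structure you uncovered.
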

  
\begin{theorem}\label{Thm4}
For all $s\geq 1$, we have \[\EN_{s,3}(2143)(q)=q^{9{s\choose 2}}F_s(1/q),\] where $F_s(r)$ is defined by $F_0(r)=F_1(r)=1$ and $F_s(r)=(1+r+2r^2)F_{s-1}(r)+r^3F_{s-2}(r)$ for $s\geq 2$. 
\end{theorem}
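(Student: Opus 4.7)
The plan is to translate the claim into a $q$-recurrence on $\EN_{s,3}(2143)(q)$ and then prove that recurrence by a structural decomposition of the relevant linear extensions. Setting $E_s(q) = \EN_{s,3}(2143)(q)$ and substituting $r = 1/q$ in the defining recurrence for $F_s$, the assertion $E_s(q) = q^{9\binom{s}{2}}F_s(1/q)$ is equivalent, for $s\ge 2$, to
\[
E_s(q) \;=\; \bigl(q^{9s-9} + q^{9s-10} + 2q^{9s-11}\bigr)\,E_{s-1}(q) \;+\; q^{18s-30}\,E_{s-2}(q),
\]
together with the base cases $E_1(q) = 1$ and $E_2(q) = q^9 + q^8 + 2q^7 + q^6$. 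The first step is to check the base cases directly: $\EN_{1,3}$ is a chain and so contributes a unique, zero-inversion linear extension, while for $\EN_{2,3}$ one lists the $2143$-avoiders by hand and tabulates their inversions.

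For the inductive step, I would decompose a $2143$-avoiding linear extension $\pi\in\EN_{s,3}(2143)$ according to how the three elements of the ``top layer'' of $\EN_{s,3}$ (the three maximal elements, carrying the three largest canonical labels) sit inside $\pi$ relative to the second layer directly beneath them. Since those three elements are maximal, their labels must occur essentially at the end of $\pi$, with only limited interleaving permitted by the poset structure, and the canonical labeling of \cite{Anderson2} restricts their relative order. I expect exactly four ``generic'' local configurations of this top layer to survive the $2143$-constraint; stripping the top layer off in each of these cases should yield a $2143$-avoiding linear extension of $\EN_{s-1,3}$ together with a predictable loss of inversions equal to $9(s-1)$, $9(s-1)-1$, $9(s-1)-2$, and $9(s-1)-2$ respectively. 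Summing the four contributions produces the factor $q^{9s-9}+q^{9s-10}+2q^{9s-11}$. The remaining term $q^{18s-30}E_{s-2}(q)$ should correspond to the ``exceptional'' $\pi$'s in which stripping only a single layer either breaks $2143$-avoidance or fails to yield a valid linear extension of $\EN_{s-1,3}$; I would set up a bijection between these and elements of $\EN_{s-2,3}(2143)$, the six peeled entries contributing exactly $18s-30$ inversions against the retained ones.

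The main obstacle is this second step: correctly identifying the four generic configurations and the single exceptional configuration, and proving that these five cases exhaust $\EN_{s,3}(2143)$. In practice, this requires (i) showing that $2143$-avoidance of $\pi$ is equivalent to $2143$-avoidance of its stripped-down image under an appropriate local hypothesis on the top layer(s); (ii) checking that the inversion exponents agree with $9s-9,\,9s-10,\,9s-11,\,9s-11$, and $18s-30$; and (iii) verifying that the exceptional case really is in bijection with $\EN_{s-2,3}(2143)$. The inversion bookkeeping should be routine once the combinatorial cases are pinned down, since each peeled entry lies canonically above everything in the smaller sub-poset and so contributes a predictable inversion count depending only on its position within the top $3k$ entries of $\pi$.
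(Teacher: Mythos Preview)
Your target recurrence and base cases are correct, but two things go wrong in the bijective plan. First, an orientation slip: in the labeling of $\EN_{s,3}$ used in \cite{Anderson2}, the three largest labels $3s-2,\,3s-1,\,3s$ sit on the \emph{minimal} side of the poset, so they appear at the \emph{beginning} of every linear extension (in fact $\pi_1=3s-2$ always), not at the end. The layer you want to strip is the initial block of $\pi$, not the final one.

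More seriously, the ``four generic $+$ one exceptional'' picture does not materialise. Deleting $3s-2,3s-1,3s$ from any $\pi\in\EN_{s,3}(2143)$ \emph{always} produces a valid $2143$-avoiding linear extension of $\EN_{s-1,3}$; there is no case where stripping a single layer fails. What actually happens is that the possible interleavings of $\{3s-2,3s-1,3s\}$ with $\{3s-5,3s-4\}$ at the front of $\pi$ give exactly \emph{five} cases. Three of them biject onto all of $\EN_{s-1,3}(2143)$ (with inversion drops $9s-9,\,9s-10,\,9s-11$), but the remaining two land only in the proper subset $H_{s-1}=\{\sigma\in\EN_{s-1,3}(2143):\sigma_2=3(s-1)-1\}$, with drops $9s-12$ and $9s-11$. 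There is no natural fourth piece surjecting onto $E_{s-1}$, and none of the five pieces peels cleanly down to $\EN_{s-2,3}(2143)$. The paper therefore introduces $H_\ell(q)=\sum_{\sigma\in H_\ell}q^{\inv(\sigma)}$ and derives the \emph{coupled} system
\[
E_s=q^{9s-11}(1+q+q^2)E_{s-1}+q^{9s-12}(1+q)H_{s-1},\qquad
H_s=q^{9s-10}(1+q)E_{s-1}+q^{9s-11}H_{s-1},
\]
from which your single second-order recurrence follows by eliminating $H$. That elimination is an algebraic step, not a one-shot bijection; if you want a purely bijective proof of the $q^{18s-30}E_{s-2}$ term you would need a much more delicate construction than stripping two layers.
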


\begin{proof}[of Theorem \ref{Thm2}]
It is easy to verify this theorem when $t\in\{1,2\}$, so assume $t\geq 3$. Let $\pi=\pi_1\cdots\pi_n$ be a linear extension of $\EN_{3,t}(1243)$ (viewed as a permutation of the labels). Note that $2t$ appears before $2$ in $\pi$ because, otherwise, the entries $1,2,2t,t$ would form a $1243$ pattern. Similarly, $3t$ must appear before $t+2$, lest the entries $t+1,t+2,3t,2t$ form a $1243$ pattern. It follows that if we remove the entries $1$ and $t+1$ from $\pi$, then we will be left with the permutation \[(2t+1)(2t+2)\cdots(3t)(t+2)(t+3)\cdots(2t)23\cdots t.\] Let $i,j$ be such that $\pi_{i+1}=t+1$ and $\pi_{j+1}=1$. We can easily check that the possibilities for $i$ and $j$ are given by $i\in\{1,\ldots,t\}$ and $j\in\{i+1,\ldots,2t\}$. We find that \[\EN_{3,t}(1243)(q)=\sum_{i=1}^t\sum_{j=i+1}^{2t}q^{3t^2-3t+i+j}.\] This can easily be rewritten as $\dfrac{q^{3(t^2-t+1)}(1-q^{2t-1}-2q^{2t}+q^{3t-1}+q^{3t})}{(1-q)(1-q^2)}$.  
\end{proof}

\begin{proof}[of Theorem \ref{Thm3}]
Let $S$ be the collection of subsets of $\{1,3,5,\ldots,2s-3\}$, and define $\eta\colon\EN_{s,2}(2143)\to S$ by \[\eta(\pi)=\{i\in\{1,3,5,\ldots,2s-3\}\colon i+3\text{ appears before }i\text{ in }\pi\}.\] The map $\eta$ is a bijection. It is now easy to check that \[\EN_{s,2}(2143)(q)=\sum_{X\subseteq\{1,3,5,\ldots,2s-3\}}q^{(2s-1)(s-1)+|X|}=q^{(2s-1)(s-1)}(1+q)^{s-1}. \qedhere\] 
\end{proof}

\begin{proof}[of Theorem \ref{Thm4}]
Let $H_\ell$ be the set of $\pi=\pi_1\cdots\pi_{3\ell}\in\EN_{\ell,3}(2143)$ such that $\pi_2=3\ell-1$. Let $H_\ell(q)=\sum_{\pi\in H_\ell}q^{\inv(\pi)}$. Fix $s\geq 2$, and let $J(r_1,\ldots,r_k)$ be the set of permutations $\pi=\pi_1\cdots\pi_{3s}\in\EN_{s,3}(2143)$ such that $\pi_i=r_i$ for all $i\in\{1,\ldots,k\}$. One can check that the sets \[J(3s-2,3s-1,3s),\hspace{.5cm}J(3s-2,3s-1,3s-5,3s),\hspace{.5cm}J(3s-2,3s-5,3s-1,3s),\] \[J(3s-2,3s-5,3s-1,3s-4,3s),\hspace{.5cm}\text{and}\hspace{.5cm}J(3s-2,3s-1,3s-5,3s-4,3s)\] partition $\EN_{s,3}(2143)$. Call these sets $J_1,J_2,J_3,J_4$, and $J_5$, respectively. 

The operation that consists of removing the entries $3s-2,3s-1$, and $3s$ from a permutation establishes bijections $J_1\to\EN_{s-1,3}(2143)$, $J_2\to\EN_{s-1,3}(2143)$, $J_3\to\EN_{s-1,3}(2143)$, $J_4\to H_{s-1}$, and $J_5\to H_{s-1}$. After taking into account the number of inversions that are removed by each of these bijections, we obtain the identities \[\sum_{\pi\in J_1}q^{\text{inv}(\pi)}=q^{9(s-1)}\EN_{s-1,3}(2143)(q),\quad\sum_{\pi\in J_2}q^{\text{inv}(\pi)}=q^{9(s-1)-1}\EN_{s-1,3}(2143)(q),\] 
\[\sum_{\pi\in J_3}q^{\text{inv}(\pi)}=q^{9(s-1)-2}\EN_{s-1,3}(2143)(q),\quad\sum_{\pi\in J_4}q^{\text{inv}(\pi)}=q^{9(s-1)-3}H_{s-1}(q),\] and 
\[\sum_{\pi\in J_5}q^{\text{inv}(\pi)}=q^{9(s-1)-2}H_{s-1}(q).\] This yields 
\begin{equation}\label{Eq2}
\EN_{s,3}(2143)(q)=q^{9(s-1)-2}(1+q+q^2)\EN_{s-1,3}(2143)(q)+q^{9(s-1)-3}(1+q)H_{s-1}(q).
\end{equation}
The sets $J_1,J_2,J_5$ partition $H_s$, so 
\begin{equation}\label{Eq3}
H_s(q)=q^{9(s-1)-1}(1+q)\EN_{s-1,3}(2143)(q)+q^{9(s-1)-2}H_{s-1}(q).
\end{equation}
Solving the recurrence relations \eqref{Eq2} and \eqref{Eq3} subject to the initial conditions $\EN_{1,3}(2143)(q)=H_1(q)=1$, we obtain the desired identity $\EN_{s,3}(2143)(q)=q^{9{s\choose 2}}F_s(1/q)$. 
\end{proof}

The article \cite{Anderson2} also poses several conjectures concerning the polynomials $F_s(q)$ and various OEIS sequences \cite{OEIS}. We settle many of these conjectures\footnote{We also correct some typos made in the original statements of these conjectures.} in the following theorem. Since our focus is on the combinatorics of pattern-avoiding linear extensions and not these specific polynomials, we omit some details from the proof. In what follows, let $[q^r]G(q)$ denote the coefficient of $q^r$ in the Laurent series $G(q)$.

\begin{theorem}\label{Thm5}
Define the polynomials $F_s(r)$ by $F_0(r)=F_1(r)=1$ and $F_s(r)=(1+r+2r^2)F_{s-1}(r)+r^3F_{s-2}(r)$ for $s\geq 2$. For $s\geq 2$, 
\begin{itemize}
\item the values of $[q^3]F_s(q)$ are given by OEIS sequence A134465;
\item the values of $[q^{2s-2}]F_s(q)$ are given by OEIS sequence A098156; 
\item the values of $[q^{s-1}]F_s(q)$ are given by OEIS sequence A116914;  
\item the values of $[q^s]F_s(q)$ are given by OEIS sequence A072547;
\item the values of $[q^{s+1}]F_s(q)$ are given by OEIS sequence A002054;  
\item the values of $[q^{s+2}]F_s(q)$ are given by OEIS sequence A127531.
\end{itemize}
\end{theorem}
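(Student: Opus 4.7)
The plan is to reduce each of the six statements to a routine verification by bundling the polynomials $F_s(q)$ into a single bivariate generating function and then extracting each coefficient sequence. First I would derive $F(x,q) := \sum_{s \geq 0} F_s(q)\, x^s$ from the recurrence and the initial conditions $F_0 = F_1 = 1$; a short manipulation gives
\[ F(x,q) = \frac{1 - (q + 2q^2)x}{1 - (1+q+2q^2)x - q^3 x^2}. \]
All six target sequences can then be extracted from this single object.

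For the fixed-index case $f(s) = 3$, I would apply $[q^3]$ directly to the identity $F_s - (1+q+2q^2)F_{s-1} - q^3 F_{s-2} = 0$, obtaining
\[ [q^3]F_s = [q^3]F_{s-1} + [q^2]F_{s-1} + 2[q^1]F_{s-1} + [q^0]F_{s-2}. \]
The auxiliary sequences $[q^0]F_s = 1$, $[q^1]F_s = s-1$, and $[q^2]F_s = \binom{s+1}{2}-1$ are computed by the same technique, so $[q^3]F_s$ is determined by a first-order recurrence with a polynomial forcing term whose closed form can be matched against A134465. For the five linearly-indexed cases $f(s) \in \{s-1, s, s+1, s+2, 2s-2\}$, the extracted identities mix nearby diagonals, so a one-variable recurrence is not immediate. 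For the four cases with $f(s) = s + k$ ($k \in \{-1, 0, 1, 2\}$), I would use the diagonal substitution: setting $G_k(x) := \sum_{s \geq 0}[q^{s+k}]F_s(q)\, x^s$, one has $G_k(x) = [y^k]F(x/y, y)$, which reduces the problem to extracting a fixed coefficient in $y$ from a rational bivariate series; equivalently, writing the 3-term recurrence $(1-x)G_k = xG_{k+1} + (2x+x^2)G_{k-1}$ (for $k \geq 2$, with explicit inhomogeneities for small $k$) and solving gives $G_k(x)$ as an explicit rational function in $x$. For $[q^{2s-2}]F_s$, which lies near the top of $F_s$ (whose degree is $2s-1$ for $s \geq 2$), I would instead work with the reversed polynomial $\tilde F_s(q) := q^{2s-1}F_s(1/q)$; a short calculation shows $\tilde F_s = (q^2+q+2)\tilde F_{s-1} + q\, \tilde F_{s-2}$, after which $[q^{2s-2}]F_s = [q^1]\tilde F_s$ falls under the fixed-index method.

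The main obstacle I anticipate is not the derivation of the recurrences --- that step is essentially mechanical once $F(x,q)$ is in hand --- but the matching step. The six OEIS sequences are described in rather different ways (binomial-sum formulas, linear recurrences with polynomial coefficients, Catalan-adjacent closed forms, and so on), so for each case one must find a description close enough to the derived rational or algebraic generating function to make the identification evident, and then verify enough initial values to pin down the match. This case-by-case tediousness is presumably what the authors mean when they write that they ``omit some details.''
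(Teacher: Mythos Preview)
Your overall plan---package the $F_s$ into the bivariate rational series
\[F(x,q)=\frac{1-(q+2q^2)x}{1-(1+q+2q^2)x-q^3x^2}\]
and then extract each target sequence from it---is exactly the paper's approach, and your formula for $F(x,q)$ matches the paper's $A(x,q)$. Two points of divergence deserve comment.

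For the near-leading coefficient $[q^{2s-2}]F_s$, your polynomial-reversal trick is a genuinely different and more elementary route than the paper's. The paper substitutes $x\mapsto x/q^2$ into $A(x,q)$ and carries out a residue computation in $q$, arriving at the rational generating function $\dfrac{x(1-2x+x^2+x^3)}{(1-2x)^2}$. Your route via $\tilde F_s=(q^2+q+2)\tilde F_{s-1}+q\tilde F_{s-2}$ reaches the same endpoint by a fixed-coefficient extraction and avoids contour integration entirely; this is a pleasant simplification.

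For the four diagonal cases $[q^{s+k}]F_s$ with $k\in\{-1,0,1,2\}$, however, your sketch has a real gap. You write that solving the three-term recurrence, or equivalently reading off $[y^k]F(x/y,y)$, ``gives $G_k(x)$ as an explicit rational function in $x$.'' It does not. The paper performs exactly this diagonal extraction via residues and finds that the relevant pole lies at
\[v_1(x)=\frac{1-x-\sqrt{1-2x-7x^2-4x^3}}{2x(2+x)},\]
so each $G_k$ is algebraic of degree two over $\mathbb Q(x)$, not rational---as one should expect, since several of the target sequences (for instance A002054, the Catalan-adjacent $\binom{2n+1}{n-1}$) have algebraic but non-rational generating functions. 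Your recurrence $(1-x)G_k=xG_{k+1}+(2x+x^2)G_{k-1}$ is correct, but it is second order in $k$ and therefore does not determine the $G_k$ without two independent boundary conditions; supplying those (either by computing two of the $G_k$ outright, or by imposing a growth condition and solving the characteristic equation in $k$, whose discriminant is precisely $1-2x-7x^2-4x^3$) is exactly the content of the residue step that the paper carries out and that your outline skips.
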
 

\begin{proof}
Let $A(x,q)=\sum_{s\geq 0}F_s(q)x^s$. The recurrence for $F_s(q)$ translates into the identity 
\begin{equation}\label{Eq4}
A(x,q)=\frac{1-(q+2q^2)x}{1-(1+q+2q^2)x-q^3x^2}.
\end{equation}
Computing $\dfrac{1}{6}\dfrac{\partial^3}{\partial q^3}A(x,q)$ proves the first bullet point. The remainder of the proof makes use of the method of diagonals, which is discussed in Section 6.3 of \cite{Stanley}.

If we view $qA(x/q^2,q)$ as a function of the complex variable $q$, then \[\sum_{s\geq 0}([q^{2s-2}]F_s(q))x^s=[q^{-1}](qA(x/q^2,q))=\frac{1}{2\pi i}\int_{|q|=\rho}qA(x/q^2,q)\,dq,\] where $\rho>0$ is sufficiently small and the integral is taken over the circle of radius $\rho$ centered at the origin. By the Residue Theorem, this is \[\sum_{j=1}^r\Res_{q=u_j(x)}(qA(x/q^2,q)),\] where $u_1(x),\ldots,u_r(x)$ are the singularities of $qA(x/q^2,q)$ (viewed as functions of $x$) that tend to $0$ as $x\to 0$. We can explicitly compute that $r=2$ and that \[u_1(x)=\frac{x(1+x)+(1-x)\sqrt{x(4+x)}}{2(1-2x)}\quad\text{and}\quad u_2(x)=\frac{x(1+x)-(1-x)\sqrt{x(4+x)}}{2(1-2x)}.\] Let $U(x,q)=q^2(x-q(1-2x))$ and $V(x,q)=x+qx(1+x)-q^2(1-2x)$ so that $qA(x/q^2,q)=U(x,q)/V(x,q)$. Let $V_q(x,q)=\dfrac{\partial}{\partial q}V(x,q)$. Since $u_1(x)$ and $u_2(x)$ are simple poles of $qA(x/q^2,q)$, we find that \[\Res_{q=u_j(x)}(qA(x/q^2,q))=\frac{U(x,u_j(x))}{V_q(x,u_j(x))}\quad\text{for }j\in\{1,2\}.\] We have \[\sum_{j=1}^2\Res_{q=u_j(x)}(qA(x/q^2,q))=\sum_{j=1}^2\frac{U(x,u_j(x))}{V_q(x,u_j(x))}=\frac{x(1-2x+x^2+x^3)}{(1-2x)^2},\] and this proves the second bullet point. 

To prove the third, fourth, fifth, and sixth bullet points, we choose an integer $\ell\leq 2$ and view \linebreak $q^{-\ell-1}A(x/q,q)$ as a complex function of the variable $q$. As above, we have \[\sum_{s\geq 0}([q^{s+\ell}]F_s(q))x^s=[q^{-1}](q^{-\ell-1}A(x/q,q))=\sum_{j=1}^t\Res_{q=v_j(x)}(q^{-\ell-1}A(x/q,q)),\] where $v_1(x),\ldots,v_t(x)$ are the singularities of $q^{-\ell-1}A(x/q,q)$ that tend to $0$ as $x\to 0$. Let $Y(x,q)=-1+x+2qx$ and $Z(x,q)=x-(1-x)q+x(2+x)q^2$ so that $q^{-\ell-1}A(x/q,q)=q^{-\ell}Y(x,q)/Z(x,q)$. If $\ell\leq 0$, then the only singularity of $q^{-\ell-1}A(x/q,q)$ that tends to $0$ as $x\to 0$ is \[v_1(x)=\dfrac{1-x-\sqrt{1-2x-7x^2-4x^3}}{2x(2+x)}.\] If $\ell\in\{1,2\}$, then there is one other singularity, which is $v_2(x)=0$. One can check that \[\Res_{q=0}(q^{-2}A(x/q,q))=1-x^{-1}\quad\text{\and}\quad\Res_{q=0}(q^{-3}A(x/q,q))=1+2x^{-1}-x^{-2}.\] Note that in each of these expressions, the coefficient of $x^s$ is $0$ for every $s\geq 2$. It follows that for all integers $\ell\leq 2$ and $s\geq 2$, the coefficient of $x^s$ in $\sum_{s\geq 0}([q^{s+\ell}]F_s(q))x^s$ agrees with the coefficient of $x^s$ in $\Res_{q=v_1(x)}(q^{-\ell-1}A(x/q,q))$. We have \[\Res_{q=v_1(x)}(q^{-\ell-1}A(x/q,q))=\frac{v_1(x)^{-\ell}Y(x,v_1(x))}{Z_q(x,v_1(x))},\] where $Z_q(x,q)=\dfrac{\partial}{\partial q}Z(x,q)$. When $\ell\in\{-1,0,1,2\}$, we can explicitly compute and simplify these expressions in order to obtain proofs of the last four bullet points. 
\end{proof}

\section{Acknowledgments}
The author thanks the anonymous referees for helpful comments.

\end{document}